\documentclass[12pt]{amsart}

\usepackage{amsfonts}
\usepackage{amssymb}
\usepackage{graphicx}
\usepackage{enumerate}
\usepackage{pgf,tikz}
\usepackage[letterpaper, left=2.5cm, right=2.5cm, top=2.5cm,
bottom=2.5cm,dvips]{geometry}
\setcounter{MaxMatrixCols}{10}

\usepackage{amsmath,amssymb,amsthm}
\usepackage{mathrsfs}
\usepackage{mathabx}\changenotsign
\usepackage{dsfont}

\usepackage[backref]{hyperref}
\hypersetup{
	colorlinks,
	linkcolor={blue!60!black},
	citecolor={green!60!black},
	urlcolor={red!60!black}
}

\linespread{1.3}

\newtheorem{theorem}{Theorem}
\theoremstyle{plain}

\newtheorem{case}{Case}

\newtheorem{conjecture}{Conjecture}

\newtheorem{lemma}{Lemma}

\numberwithin{equation}{section}

\DeclareMathOperator{\modulo}{mod}

\begin{document}
	\title[Reflections on the Erd\H {o}s Discrepancy Problem]{Reflections on the Erd\H {o}s Discrepancy Problem}
	
	\author{Bart\l omiej Bosek}
	\address{Theoretical Computer Science Department, Faculty of Mathematics and Computer Science, Jagiellonian
		University, 30-348 Krak\'{o}w, Poland}
	\email{bosek@tcs.uj.edu.pl}
	\author{Jaros\l aw Grytczuk}
	\address{Faculty of Mathematics and Information Science, Warsaw University
		of Technology, 00-662 Warsaw, Poland}
	\email{j.grytczuk@mini.pw.edu.pl, b.pilat@mini.pw.edu.pl}
	\thanks{Research supported by the National Science Center of Poland, grant 2015/17/B/ST1/02660}

\begin{abstract}
We consider some coloring issues related to the famous Erd\H {o}s Discrepancy Problem. A set of the form $A_{s,k}=\{s,2s,\dots,ks\}$, with $s,k\in \mathbb{N}$, is called a \emph{homogeneous arithmetic progression}. We prove that for every fixed $k$ there exists a $2$-coloring of $\mathbb N$ such that every set $A_{s,k}$ is \emph{perfectly balanced} (the numbers of red and blue elements in the set $A_{s,k}$ differ by at most one). This prompts reflection on various restricted versions of Erd\H {o}s' problem, obtained by imposing diverse confinements on parameters $s,k$. In a slightly different direction, we discuss a \emph{majority} variant of the problem, in which each set $A_{s,k}$ should have an excess of elements colored differently than the first element in the set. This problem leads, unexpectedly, to some deep questions concerning completely multiplicative functions with values in $\{+1,-1\}$. In particular, whether there is such a function with partial sums bounded from above.
\end{abstract}

\maketitle

\section{Introduction}
For a number $h\in \mathbb{N}$, a red-blue coloring of a finite set $A$ is said to be \emph{$h$-balanced} if the numbers of red and blue elements in $A$ differ by at most $h$. If $h=1$, then we call it \emph{perfectly balanced}. For positive integers $s,k\in \mathbb N$, we denote by $A_{s,k}=\{s,2s,\dots,ks\}$ the homogeneous arithmetic progression of length $k$ and step $s$.

In 1932 Erd\H {o}s \cite{Erdos} asked if there exists a constant $h$ and a red-blue coloring of $\mathbb{N}$ such that \emph{every} homogeneous arithmetic progression is $h$-balanced. The property in question seems unbelievable, and in fact, he expressed a guess that there is no such constant. This was confirmed only recently by Tao \cite{Tao}, with a support of collective efforts in a Polymath Project \cite{Polymath}.

We prove in this note that for every fixed $k\in \mathbb{N}$ there is a red-blue coloring of $\mathbb N$ which is perfectly balanced on all sets $A_{s,k}$.

\begin{theorem}\label{Theorem Main}
	For every fixed $k\in\mathbb N$ there exists a red-blue coloring of $\mathbb N$ such that every homogeneous arithmetic progression $A_{s,k}$ is perfectly balanced.
\end{theorem}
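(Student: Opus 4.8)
The plan is to reduce the problem to a single numerical condition about \emph{completely multiplicative} sign functions. Identify a red-blue coloring with a function $\chi\colon\mathbb N\to\{+1,-1\}$, and suppose $\chi$ is completely multiplicative, i.e.\ $\chi(mn)=\chi(m)\chi(n)$ for all $m,n$. Then the signed size of $A_{s,k}$ is
$$\sum_{i=1}^{k}\chi(is)=\chi(s)\sum_{i=1}^{k}\chi(i)=\chi(s)\,S_k,\qquad S_k:=\sum_{i=1}^{k}\chi(i),$$
so its absolute value equals $|S_k|$ for \emph{every} $s$. Hence the whole theorem follows once we exhibit, for the given $k$, a completely multiplicative $\chi$ with $|S_k|\le 1$. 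Crucially, $\chi$ may be chosen separately for each fixed $k$; this is why the construction does not contradict Tao's theorem, which forbids a single $\chi$ that works for all $k$ at once (bounded partial sums).

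It remains to prove that for every $k$ there are signs $\epsilon_p:=\chi(p)\in\{+1,-1\}$, one for each prime $p\le k$, making $|S_k|\le 1$. Since $S_k$ is a sum of $k$ terms $\pm1$, we have $S_k\equiv k\pmod 2$, so the target is $S_k=0$ when $k$ is even and $S_k=\pm1$ when $k$ is odd; in either case a value in $\{-1,0,1\}$ of the correct parity. The only data that matter are the $\pi(k)$ signs on primes $\le k$, giving $2^{\pi(k)}$ candidate values of $S_k$, and I must show this achievable set meets $\{-1,0,1\}$.

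The mechanism I would exploit is that the large primes act as \emph{fine adjusters}. By Bertrand's postulate there is a prime $p$ with $k/2<p\le k$; its only multiple in $\{1,\dots,k\}$ is $p$ itself, so $\chi(p)=\epsilon_p$ occurs in exactly one summand of $S_k$, and $S_k=\epsilon_p+R$ with $R$ independent of $\epsilon_p$. Thus flipping $\epsilon_p$ shifts $S_k$ by exactly $\pm2$. More systematically, grouping each $i\le k$ by its largest prime factor yields the identity $S_k=1+\sum_{p\le k}\epsilon_p\,T_p$, where $T_p=\sum_{i'\le k/p,\ \mathrm{lpf}(i')\le p}\chi(i')$; for $p>\sqrt k$ this is simply $T_p=S_{\lfloor k/p\rfloor}$, and in particular every prime in $(k/2,k]$ carries weight $T_p=S_1=1$, supplying an independent free $\pm1$. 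The strategy is therefore to fix the signs on the small primes $p\le\sqrt k$ (inductively, so that the weights $S_{\lfloor k/p\rfloor}$ stay small), and then use the signed sum over the large primes, rich in unit weights, to tune $S_k$ onto the target.

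The main obstacle is the coupling: all the weights $S_{\lfloor k/p\rfloor}$ depend on the \emph{same} small-prime signs, so one cannot optimise them independently, and a careless choice leaves a bulk contribution of order $\sqrt k$ that must be cancelled. I expect to handle this through a strengthened induction on $k$ asserting that the achievable set of values $S_k$ is a full arithmetic progression of step $2$ and of the correct parity, running from a minimum that is at most $1$ up to the maximum $k$ (attained by $\chi\equiv 1$); since a value of $\{-1,0,1\}$ then lies between the two extremes, the progression must contain it. Proving this gap-freeness, namely that the $\pm1$ adjusters from primes in $(k/2,k]$, combined across the dyadic bands $(k/(t+1),k/t]$, realise every even increment across the range, is the delicate point, and is exactly where Bertrand's postulate and the inductive control of the smaller sums $S_{\lfloor k/p\rfloor}$ enter.
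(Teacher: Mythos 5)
Your reduction is exactly the paper's first step: for a completely multiplicative $\chi$ one has $\sum_{i=1}^k\chi(is)=\chi(s)S_k$, so it suffices to find, for the given $k$, a multiplicative sign function on $\{1,\dots,k\}$ with $|S_k|\le 1$ (this is the paper's Lemma~\ref{Lemma Multiplicative Balanced}). Your observation that each prime $p\in(k/2,k]$ enters $S_k$ in exactly one summand, so that flipping $\chi(p)$ shifts $S_k$ by exactly $2$, is also the paper's adjustment mechanism. The problem is that everything after that point in your proposal is a plan rather than a proof, and the plan does not close.

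The gap is the existence step: you never exhibit a starting coloring from which the adjusters can reach $\{-1,0,1\}$. Your proposed route --- a strengthened induction asserting that the achievable set of values of $S_k$ is a gap-free arithmetic progression of step $2$ whose minimum is at most $1$ --- is essentially a restatement of the theorem, and you explicitly leave its key clause (gap-freeness across the dyadic bands, and control of the coupled weights $S_{\lfloor k/p\rfloor}$) unproved. The coupling you identify is precisely the obstruction, and nothing in the proposal overcomes it. The paper resolves this by importing a concrete base point: the Borwein--Choi--Coons multiplicative function $b$ with $b(3)=+1$ and $b(p)\equiv p\ (\mathrm{mod}\ 3)$, whose partial sum $\sum_{i=1}^k b(i)$ equals the number of $1$'s in the base-$3$ expansion of $k$ and is therefore between $0$ and $\lceil\log_3 k\rceil+1$. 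Only a logarithmic excess then needs to be cancelled, and the adjusters must merely be shown to be sufficiently numerous \emph{and of the correct current sign}: the paper flips primes $p\equiv 1\ (\mathrm{mod}\ 3)$ in $(k/2,k)$, which all carry $b(p)=+1$, and uses McCurley's explicit bound $0.49x\le\theta(x;3,1)\le 0.51x$ to show there are at least $0.47\,k/(2\log k)\ge\lceil\log_3 k\rceil+1$ of them for $k\ge 17377$ (small $k$ by computer). Without such a base function with provably small partial sums --- or a genuine proof of your gap-freeness claim --- the argument does not go through; note also that Bertrand's postulate alone gives you only \emph{one} guaranteed adjuster, whereas a priori you may need on the order of $S_k/2$ of them, with prescribed signs.
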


The proof uses completely multiplicative functions with values in the set $\{+1,-1\}$ and some estimates for primes in arithmetic progressions. We give it in Section 2. In Section 3 we present another approach that did not appear to be successful, but leads to an intriguing open problem. In Section 4 we propose a new variant of the Erd\H {o}s Discrepancy Problem, motivated by the \emph{majority} coloring of graphs. This leads in turn to a question concerning completely multipliactive functions, resembling the famous conjecture of P\'{o}lya \cite{Polya}, concerning partial sums of the \emph{Liouville function} $\lambda(n)$. Finally, in Section 5, we briefly describe our initial motivation that led us to Theorem \ref{Theorem Main}, and discuss the related problem concerning \emph{rainbow} homogeneous arithmetic progressions of fixed length $k$.

\section{The proof}

Recall that an arithmetic function $f$ is \emph{completely multiplicative} if it satisfies $f(ab)=f(a)f(b)$ for every pair of positive integers $a,b\in \mathbb{N}$. Notice that this implies that $f(1)=1$. Since we will consider only functions with two values $\{+1,-1\}$ (corresponding to colors red and blue), we will call them shortly \emph{multiplicative colorings}.

We start with the following simple lemma.

\begin{lemma}\label{Lemma Multiplicative Balanced}
Let $k$ be a fixed positive integer. Suppose that $c$ is a multiplicative coloring of the set $\{1,2,\dots,k\}$ which is perfectly balanced. Then $c$ can be extended to a multiplicative coloring of $\mathbb N$ which is perfectly balanced on every set $A_{s,k}$.
\end{lemma}
\begin{proof}
Let $C=(c(1),c(2),\dots,c(k))$ be the initial color sequence satisfying the assumptions of the lemma. In particular, it satisfies
\begin{equation}
	c(1)+c(2)+\cdots +c(k) \in \{+1,-1,0\}.
\end{equation}

We extend the coloring $c$ to the whole of $\mathbb N$ in a natural way. First, if $p\geqslant k+1$ is a prime number, then we may chose for $c(p)$ any value from $\{+1,-1\}$. If $n=p_1p_2\cdots p_r$ is a product of primes $p_i$, then we compute $c(n)=c(p_1)c(p_2)\cdots c(p_r)$. So, the coloring $c$ is multiplicative. Consequently, the color sequence of every set $A_{s,k}$ satisfies 
\begin{equation}
(c(s),c(2s),\dots,c(ks)) =c(s)(c(1),c(2),\dots,c(k))=\pm C,
\end{equation}
and is therefore perfectly balanced.
\end{proof}

The next lemma comes from the paper by Borwein, Choi, and Coons \cite{BorweinCC}.

\begin{lemma}[Borwein, Choi, Coons, \cite{BorweinCC}]\label{Lemma Borwein}
	Let $b$ be a multiplicative coloring of $\mathbb{N}$ defined by $b(3)=+1$ and $b(p)\equiv p(\modulo 3)$ for all other primes $p$. Then for every $k\geqslant 1$, the sum $\Sigma_{i=1}^{k}b(i)$ equals the number of $1$'s in base $3$ expansion of $k$. In consequence, for every $k\geqslant 1$ we have
	\begin{equation}
0\leqslant\Sigma_{i=1}^{k}b(i)\leqslant \lceil \log_3 k\rceil +1.
	\end{equation}
\end{lemma}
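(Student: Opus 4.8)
The plan is to exploit the self-similar, base-$3$ structure of $b$, reducing the partial sum to a recursion on $\lfloor k/3\rfloor$. Write $S(k)=\sum_{i=1}^{k}b(i)$, and let $N_1(k)$ denote the number of digits equal to $1$ in the base-$3$ expansion of $k$. The goal is to prove $S(k)=N_1(k)$ for every $k\geqslant 0$, from which both inequalities follow at once.

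First I would pin down the values of $b$ on residue classes modulo $3$. For any $n$ coprime to $3$, factor $n=p_1p_2\cdots p_r$ into primes (with repetition), each $p_i\neq 3$; then $b(n)=b(p_1)\cdots b(p_r)$ by complete multiplicativity. Since $b(p_i)\equiv p_i\pmod 3$ and residues modulo $3$ multiply consistently with the sign convention $(+1\equiv 1,\ -1\equiv 2)$, we obtain $b(n)\equiv n\pmod 3$. Hence $b(n)=+1$ whenever $n\equiv 1\pmod 3$ and $b(n)=-1$ whenever $n\equiv 2\pmod 3$; in other words $b$ agrees with the nonprincipal character modulo $3$ off the prime $3$. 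Separately, $b(3)=+1$ together with multiplicativity gives the scaling identity $b(3m)=b(m)$ for every $m$. These two facts are the crux of the argument.

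With these in hand, the main step is to group $1,\dots,k$ into consecutive blocks of length three. For each $j\geqslant 0$ the block $\{3j+1,3j+2,3j+3\}$ contributes
\[
b(3j+1)+b(3j+2)+b(3j+3)=(+1)+(-1)+b(j+1)=b(j+1),
\]
since $3j+1\equiv 1$ and $3j+2\equiv 2\pmod 3$, while $b(3(j+1))=b(j+1)$. Summing over full blocks yields $S(3q)=S(q)$, and appending the leftover terms gives $S(3q+1)=S(q)+1$ and $S(3q+2)=S(q)$. These are exactly the recursions satisfied by $N_1$, because adjoining a final base-$3$ digit $0$, $1$, or $2$ to $q$ changes the number of $1$'s by $0$, $1$, or $0$ respectively. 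As the base cases $S(0)=N_1(0)=0$ agree, strong induction on $k$ delivers $S(k)=N_1(k)$.

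Finally the stated bounds are immediate: $S(k)=N_1(k)\geqslant 0$ because it counts digits, and $N_1(k)$ cannot exceed the total number of base-$3$ digits of $k$, which is $\lfloor\log_3 k\rfloor+1\leqslant \lceil\log_3 k\rceil+1$. I expect the only genuinely delicate point to be the first step---justifying that $b$ coincides with the character modulo $3$ on \emph{all} integers coprime to $3$, not merely on primes---since everything afterward is a clean bookkeeping of base-$3$ digits.
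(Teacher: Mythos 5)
Your proof is correct. Note that the paper itself gives no proof of this lemma --- it is quoted verbatim from Borwein, Choi, and Coons \cite{BorweinCC} --- so there is nothing internal to compare against; your argument (showing $b$ agrees with the nonprincipal character mod $3$ on integers coprime to $3$, using $b(3m)=b(m)$ to get the recursion $S(3q)=S(3q+2)=S(q)$ and $S(3q+1)=S(q)+1$, and matching this against the digit count) is the standard one and is essentially the proof in the cited reference. The one point you flagged as delicate is handled correctly: the identification $1\leftrightarrow +1$, $2\leftrightarrow -1$ is a group isomorphism from $(\mathbb{Z}/3\mathbb{Z})^{*}$ to $\{\pm 1\}$, so complete multiplicativity of $b$ propagates the congruence $b(p)\equiv p \pmod 3$ from primes to all $n$ coprime to $3$.
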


We will also need some estimates on the number of primes of the form $3m+1$ in the interval $(N,2N)$. Recall that the Chebyshev function $\theta(x;3,1)$ is defined by 
\begin{equation}
\theta(x;3,1)=\sum_{p\equiv1(\modulo3),p\leqslant x}\log p.
\end{equation}
We will use the following result of McCurley \cite{McCurley}.

\begin{lemma}[McCurley \cite{McCurley}]\label{Lemma McCurley}
For $x\geqslant 17377$ we have
\begin{equation}
0.49x\leqslant \theta(x;3,1)\leqslant 0.51x.
\end{equation}
\end{lemma}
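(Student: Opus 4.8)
The plan is to recover this as an explicit, fully numerical form of the prime number theorem for the arithmetic progression $1\ (\modulo 3)$. The natural object to analyze is the von Mangoldt-weighted sum $\psi(x;3,1)=\sum_{n\leqslant x,\ n\equiv 1(\modulo 3)}\Lambda(n)$, which differs from $\theta(x;3,1)$ only by the contribution of proper prime powers. First I would separate these two. Every prime square $p^2$ with $p\neq 3$ lies in the class $1\ (\modulo 3)$, so the correction $\psi(x;3,1)-\theta(x;3,1)$ is essentially $\theta(\sqrt{x})$, of size about $\sqrt{x}$; at $x=17377$ this is already comparable to the target width $0.01x\approx 174$, so this term must be tracked exactly rather than discarded. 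The core of the argument is thus to prove a two-sided bound $|\psi(x;3,1)-\tfrac12 x|\leqslant \delta x$ with $\delta$ small enough that, after restoring the prime-power correction, the interval $[0.49x,0.51x]$ is respected.

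To get at $\psi(x;3,1)$ I would use orthogonality of Dirichlet characters modulo $3$. There are exactly two such characters, the principal one $\chi_0$ and the unique nonprincipal (real, odd) character $\chi_1$, and since $\chi_1(1)=1$ we obtain the clean identity
\begin{equation}
\psi(x;3,1)=\tfrac12\bigl(\psi(x,\chi_0)+\psi(x,\chi_1)\bigr),\qquad \psi(x,\chi)=\sum_{n\leqslant x}\Lambda(n)\chi(n).
\end{equation}
Here $\phi(3)=2$ explains the main term $\tfrac12 x$. The principal-character piece is $\psi(x,\chi_0)=\psi(x)-\lfloor\log_3 x\rfloor\log 3$, so it is controlled by an explicit Chebyshev bound for the full function $\psi(x)$: the Rosser--Schoenfeld estimates supply $|\psi(x)-x|\leqslant \eta_1 x$ with an explicit and small $\eta_1$ on the relevant range, while the subtracted powers of $3$ contribute only $O(\log x)$.

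The heart of the proof is the bound on the nonprincipal piece $\psi(x,\chi_1)$, which is $o(x)$ by the prime number theorem but which we need with an explicit constant $|\psi(x,\chi_1)|\leqslant \eta_2 x$. I would invoke the truncated explicit formula
\begin{equation}
\psi(x,\chi_1)=-\sum_{|\gamma|\leqslant T}\frac{x^{\rho}}{\rho}+R(x,T),
\end{equation}
where $\rho=\beta+i\gamma$ runs over the nontrivial zeros of $L(s,\chi_1)$ and $R(x,T)$ is an explicit remainder of size roughly $x(\log x)^2/T$. The zero-sum is then split according to two numerical inputs: the verification of the Riemann Hypothesis for $L(s,\chi_1)$ up to a large height $T_0$, under which every zero with $|\gamma|\leqslant T_0$ has $\beta=\tfrac12$ and so contributes only $O(\sqrt{x}\,\log^2 T_0)$, and an explicit zero-free region $\beta<1-1/(R\log(3(|\gamma|+2)))$ for $|\gamma|>T_0$, under which the remaining zeros contribute a term of the shape $x^{1-c/\log T}$. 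Optimizing the truncation height $T$ balances $R(x,T)$ against these contributions and yields the desired small $\eta_2$.

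Finally I would assemble the pieces. Combining the three estimates gives $\psi(x;3,1)=\tfrac12 x+E(x)$ with $|E(x)|\leqslant \tfrac12(\eta_1+\eta_2)x+O(\log x)$, and restoring the prime-power correction of size about $\sqrt{x}$ converts this into a two-sided bound on $\theta(x;3,1)$; one then checks that the total error stays below $0.01x$ for every $x\geqslant 17377$. The main obstacle is entirely quantitative: the window $\pm 0.01x$ is narrow, so the explicit formula must be pushed with a genuinely large verified zero height $T_0$ and a sharp zero-free constant $R$, and every remainder above must be replaced by honest numerical constants in place of the $O$-notation. The analytic bounds become strong enough only once $x$ exceeds some threshold $X_0$; for the finite remaining range $17377\leqslant x\leqslant X_0$ one certifies the inequalities by direct computation, tabulating the primes $p\equiv 1(\modulo 3)$ and summing $\log p$. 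The precise cutoff $17377$ is then the smallest $x$ beyond which the bound holds without exception, reflecting an irregular fluctuation of $\theta(x;3,1)$ just below it.
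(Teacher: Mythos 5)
This lemma is imported rather than proved in the paper --- the authors simply cite McCurley \cite{McCurley} --- and your sketch reconstructs essentially the method of that cited source: orthogonality of the two characters modulo $3$ to split $\psi(x;3,1)$ into $\tfrac12\bigl(\psi(x,\chi_0)+\psi(x,\chi_1)\bigr)$, Chebyshev-type explicit bounds for the principal part, a truncated explicit formula for $L(s,\chi_1)$ combined with a numerically verified zero height and an explicit zero-free region, restoration of the prime-power correction of size about $\sqrt{x}$ (which is one-sided, since $\theta(x;3,1)=\psi(x;3,1)$ minus nonnegative terms), and direct computation of $\theta(x;3,1)$ on the finite initial range. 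Your proposal is therefore correct in outline and takes essentially the same route as the actual proof behind the citation; the paper itself contains nothing further to compare against.
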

Using this lemma we get a useful lower bound for the number of primes of the form $3m+1$ between $x$ and $2x$.

\begin{lemma}\label{Lemma f(N)}
Let $f(x)$ denote the number of primes of the form $3m+1$ in the interval $(x,2x)$. Then for every $x\geqslant17377$ we have
\begin{equation}
f(x)\geqslant 0.47\frac{x}{\log(2x)}.
\end{equation}
\end{lemma}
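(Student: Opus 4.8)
The plan is to bound $f(x)$ from below by extracting it from the Chebyshev function $\theta(x;3,1)$ via Lemma~\ref{Lemma McCurley}. First I would write the count of primes $p \equiv 1 \pmod 3$ in $(x,2x)$ in terms of $\theta$. The key observation is that each such prime contributes $\log p \leqslant \log(2x)$ to the sum $\theta(2x;3,1) - \theta(x;3,1)$, since every prime in $(x,2x)$ satisfies $p < 2x$. Therefore
\begin{equation}
\theta(2x;3,1) - \theta(x;3,1) = \sum_{\substack{p \equiv 1 (\modulo 3) \\ x < p \leqslant 2x}} \log p \leqslant f(x) \log(2x),
\end{equation}
which upon rearranging yields the lower bound
\begin{equation}
f(x) \geqslant \frac{\theta(2x;3,1) - \theta(x;3,1)}{\log(2x)}.
\end{equation}

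Next I would apply Lemma~\ref{Lemma McCurley} to each of the two Chebyshev values. For the subtracted term I want an upper bound, so I use $\theta(x;3,1) \leqslant 0.51x$; for the leading term I want a lower bound, so I use $\theta(2x;3,1) \geqslant 0.49 \cdot (2x) = 0.98x$. The numerator is then at least $0.98x - 0.51x = 0.47x$, giving exactly $f(x) \geqslant 0.47 \, x / \log(2x)$, as claimed. The one point of care is the hypothesis ranges: Lemma~\ref{Lemma McCurley} requires its argument to be at least $17377$. The lower bound $\theta(2x;3,1) \geqslant 0.98x$ needs $2x \geqslant 17377$, which is implied by $x \geqslant 17377$; the upper bound $\theta(x;3,1) \leqslant 0.51x$ needs $x \geqslant 17377$ directly. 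So the stated threshold $x \geqslant 17377$ suffices to invoke the lemma at both points, and no further restriction is needed.

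The main obstacle, such as it is, is bookkeeping rather than substance: one must be careful to pair the lower estimate of $\theta$ with the term that appears with a plus sign and the upper estimate with the term that appears with a minus sign, since swapping them would weaken (indeed reverse the usefulness of) the bound. There is also a minor subtlety in whether the interval endpoints $x$ and $2x$ are included, but since we may assume $x$ is not itself relevant as a prime value at the boundary and the difference $\theta(2x;3,1) - \theta(x;3,1)$ counts precisely the primes in $(x, 2x]$ (hence at least those in the open interval $(x,2x)$ that $f$ counts), the inequality goes the right way and the constant $0.47$ is comfortably attained.
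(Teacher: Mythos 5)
Your proof is correct and follows essentially the same route as the paper's: both bound each summand $\log q_i$ by $\log(2x)$ to get $\theta(2x;3,1)-\theta(x;3,1)\leqslant f(x)\log(2x)$, then apply Lemma~\ref{Lemma McCurley} in the directions you describe to obtain the numerator $0.98x-0.51x=0.47x$. Your extra remarks on the hypothesis ranges and the interval endpoints are sensible but do not change the argument.
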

\begin{proof}
	Let $f(x)=r$, and let $q_1,q_2,\dots, q_r$ be all the primes of the form $3m+1$ in the interval $(x,2x)$. Then we may write
	\begin{equation}
	\theta(2x;3,1)-\theta(x;3,1)=\sum_{i=1}^{r} \log q_i \leqslant \log ((2x)^r)=r\log(2x).
	\end{equation}
	Using Lemma \ref{Lemma McCurley} we get
	\begin{equation}
	\theta(2x;3,1)-\theta(x;3,1)\geqslant 0.98x-0.51x=0.47x.
	\end{equation}
	Hence, we obtain $r\geqslant \frac{0.47x}{\log(2x)}$ for all $x\geqslant 17377$.
\end{proof}

Now we are ready to prove Theorem \ref{Theorem Main}
\begin{proof}[Proof of Theorem \ref{Theorem Main}]

Let $k\in \mathbb{N}$ be fixed. By Lemma \ref{Lemma Multiplicative Balanced} it is sufficient to construct a multiplicative perfectly balanced coloring of $\{1,2,\dots,k\}$. For small values of $k$, say for $k\leqslant 10^6$, this can be done by computer. So, assume that $k\geqslant 17377$, and let $b$ be a multiplicative coloring form Lemma \ref{Lemma Borwein}.

We will switch the colors of prime numbers of the form $p=3m+1$ lying in the range $(\frac{k}{2},k)$, from plus to minus, so that the resulting coloring will be balanced. This will not affect multiplicativity. Moreover, there are sufficiently many such primes since by Lemma \ref{Lemma Borwein} and Lemma \ref{Lemma f(N)}, their number satisfies
\begin{equation}
f\left(\frac{k}{2}\right)\geqslant 0.47\frac{k}{2\log k}\geqslant \lceil \log_3{k} \rceil+1 \geqslant \sum_{i=1}^{k}b(i).
\end{equation}
This completes the proof.
\end{proof}

\section{Greedy muliplicative coloring}

To prove Theorem \ref{Theorem Main} we firstly considered a different approach proposed by Rejmer (personal communication). It is a simple algorithm producing a perfectly balanced multiplicative coloring of the set $\{1,2,\dots,k\}$ in a \emph{greedy} way.

Let $k$ be a fixed positive integer. We start with putting $c(1)=+1$. In each consecutive step we color the next integer so that the new coloring is perfectly balanced and multiplicative. So, in the second step we put $c(2)=-1$. For a more precise description, suppose that after $j-1$ steps, $j\geqslant 2$, we obtained a perfectly balanced multiplicative coloring $(c(1),c(2),\dots, c(j-1))$. In the next step we distinguish two cases.
\begin{case}[$j-1=2m$]\emph{In this case we must have $\sum_{i=1}^{2m}c(i)=0$. Thus, any choice for $c(j)$ will not destroy the perfect balance. If $j$ is composite, then $c(j)$ is determined by multiplicativity. If $j$ is prime, then we may put $c(j)=-1$.}
\end{case}
\begin{case}[$j-1=2m-1$]\emph{Then $j$ is even, so $c(j)$ is determined by multiplicativity. Since $\sum_{i=1}^{2m-1}c(i)=\pm 1$, we may have either $\sum_{i=1}^{2m}c(i)=0$ or $\sum_{i=1}^{2m}c(i)=\pm 2$. In the former case we do nothing. In the later case, we find the largest prime $p>\frac{j}{2}$ such that $c(p)$ has \textquotedblleft wrong\textquotedblright sign, and switch it. This makes the new coloring $(c(1),c(2),\dots ,c(j))$ perfectly balanced.}
\end{case}
Notice that by Bertrand's Postulate, there is always a prime between $\frac{j}{2}$ and $j$. However, it is not clear that there will always be a prime whose sign-switching would improve balance. For instance, in the $16$th step of the algorithm we get the following coloring:
$$
\begin{array}{|c|c|c|c|c|c|c|c|c|c|c|c|c|c|c|c|}
\hline
1&2&3&4&5&6&7&8&9&10&11&12&13&14&15&16 \\
\hline
+&-&-&+&-&+&-&-&+&+&+&-&-&+&+&+\\
\hline
\end{array}
$$
with $9$ pluses and $7$ minuses. To fix this imbalance, we go back to the first prime to the left, which is $p=13$. However, $c(13)=-1$, so switching this color into $c(13)=+1$ would only increase imbalance. Fortunately, for the next prime $p=11$ we have $c(11)=+1$, so we may switch it to $c(11)=-1$ and get in this way a perfectly balanced multiplicative coloring.

We do not known if this algorithm ever stops.

\begin{conjecture}\label{Conjecture Rejmer}
	Rejmer's algorithm runs ad infinitum.
\end{conjecture}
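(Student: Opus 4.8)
The plan is to recast the algorithm in terms of the partial sums $S_n=\sum_{i=1}^{n}c(i)$ and to isolate the exact configuration that would cause it to halt. Because the coloring stays perfectly balanced, the invariant $|S_n|\leqslant 1$ holds throughout, and in fact $S_{2m}=0$ after every even step (before and after any correction). Hence at an odd step one has $S_{2m-1}=c(2m-1)=\pm 1$, and at the following even step $S_{2m}=c(2m-1)+c(2m)$, which equals $\pm 2$ exactly when $c(2m-1)=c(2m)$. Writing $\epsilon$ for this common value, a correction is needed precisely in that case, and the algorithm can continue iff the window $(m,2m)$ contains a prime $p$ with the excess sign $c(p)=\epsilon$, which it then flips (such a $p$ lies in $(m,2m)$ so that, by multiplicativity, flipping it alters only $c(p)$ among $c(1),\dots,c(2m)$). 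Thus the algorithm halts at step $2m$ if and only if
\begin{equation}
c(2m-1)=c(2m)=\epsilon \quad\text{while}\quad c(p)=-\epsilon \text{ for every prime } p\in(m,2m).
\end{equation}
The whole conjecture reduces to showing this \emph{alignment configuration} never occurs. I would first dispose of the easy instances: if $2m-1$ is prime then it lies in $(m,2m)$ and carries the sign $\epsilon$, contradicting the alignment; since any prime factor of $2m-1$ exceeding $m$ would force $2m-1$ to equal that prime, one may restrict attention to $m$ for which $2m-1$ is composite, all of whose prime factors are then at most $m$.

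For the remaining cases I would attempt a counting argument in the spirit of Section 2, using the explicit estimates of Lemmas \ref{Lemma McCurley} and \ref{Lemma f(N)}. At a putative halting step the primes in $(m,2m)$ are \emph{all} equal to $-\epsilon$, so their signed contribution is $-\epsilon\,(\pi(2m)-\pi(m))\approx -\epsilon\,m/\log m$. On the other hand $\sum_{m<n\leqslant 2m}c(n)=S_{2m}-S_m$ has absolute value at most $3$, since $S_{2m}=2\epsilon$ and $|S_m|\leqslant 1$. The composite integers in the window would therefore have to supply a signed sum of magnitude $\approx m/\log m$ in the direction $+\epsilon$, cancelling the prime bias to within $O(1)$. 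The natural hope is to show that such a one-sided bias among the composites is incompatible with the multiplicative relations $c(n)=c(2)c(n/2)$ and $c(n)=c(p)c(n/p)$ that tie the window back to the already-balanced initial segment $\{1,\dots,m\}$.

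The hard part is that this last step is a genuine anticorrelation statement about a completely multiplicative $\pm1$ function, and the crude density count is too weak to close it: the required composite bias $m/\log m$ is only a $1/\log m$ fraction of the $\approx m$ composites in $(m,2m)$, so cardinality alone yields no contradiction. Worse, the prime signs are not free parameters — the very invariant $|S_n|\leqslant 1$ forces heavy correlations among them, and a single prime may be flipped at many scales $2m\in(p,2p)$, so any honest accounting must track the joint evolution of the signs $\{c(p)\}$ rather than count flips one interval at a time. This is exactly the kind of question — ruling out a sustained sign alignment of a multiplicative $\pm1$ function across a Bertrand interval, against the direction of a controlled partial sum — that belongs to the same circle of ideas as the P\'{o}lya and Erd\H{o}s discrepancy problems discussed in Sections 2 and 4. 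I expect controlling this correlated dynamics, rather than the bookkeeping reductions above, to be the decisive obstacle, which is consistent with the authors leaving the statement as a conjecture.
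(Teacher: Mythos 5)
The statement you are trying to prove is left open in the paper: the authors explicitly write ``We do not know if this algorithm ever stops'' and record the claim only as Conjecture \ref{Conjecture Rejmer}. Your proposal, by its own admission, does not close the argument either, so what you have is a (partly useful) reformulation rather than a proof. The reformulation itself is largely sound: the halting condition is indeed that at some even step $2m$ one has $c(2m-1)=c(2m)=\epsilon$ while every prime in $(m,2m)$ carries the sign $-\epsilon$, and your observation that $2m-1$ prime immediately supplies a flippable prime (since the algorithm assigns new primes the sign $-1$, which is then the excess sign) is correct. But the decisive step --- showing the alignment configuration never occurs --- is exactly the content of the conjecture, and you rightly identify that nothing in Sections 2--4 (Bertrand-type counts, the McCurley estimates, or the Borwein--Choi--Coons coloring) supplies the needed anticorrelation between prime signs and the controlled partial sums.

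There is also a concrete error in the bookkeeping you do carry out. You assert that $|S_m|\leqslant 1$ ``holds throughout'' and deduce $|S_{2m}-S_m|\leqslant 3$. The invariant only says that the \emph{full} current sequence is perfectly balanced at the end of each step; it says nothing about the prefix sums of the current coloring at later times, because a correction at an even step $j\in(m,2m)$ may flip a prime $p$ with $j/2<p\leqslant m$ and thereby change $S_m$ by $\pm2$. The paper's own displayed coloring at step $16$ already exhibits this: there $S_8=-2$, so $S_{16}-S_8=4>3$. Worse, between steps $m$ and $2m$ there can in principle be one such flip per even step, each moving $S_m$ by $2$, so the drift of $S_m$ is a priori bounded only by something of the same order $m/\log m$ as the prime count $\pi(2m)-\pi(m)$ you want to play it against; the counting framework therefore does not obviously produce any tension at all, even as a heuristic. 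In short: the reductions need repair, and the core anticorrelation statement remains entirely open, consistent with the status of the statement as a conjecture in the paper.
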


Rejmer made some computational experiments with his algorithm. In particular, he run it up to $10^9$ steps producing in this way perfectly balanced multiplicative colorings of $A_{1,k}$ for all $k\leqslant 10^9$. Notice that the first half terms of the last coloring will not be changed in the future. Thus, assuming validity of Conjecture \ref{Conjecture Rejmer}, the algorithm defines an intriguing recursive binary sequence $R(n)$ over $\{-1,+1\}$. Up to $n=40$ Rejmer's sequence coincides with the Liouville function $\lambda(n)$ (defined by $\lambda(p)=-1$ for all primes $p$), but $R(41)=+1$. The same happens for many other primes, in particular $R(97)=R(101)=+1$. One may suspect that there will be infinitely many primes $p$ with $R(p)=+1$.

\begin{conjecture}\label{Conjecture Rejmer 2}
There exist infinitely many primes $p$ for which $R(p)=+1$.
\end{conjecture}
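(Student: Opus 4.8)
The plan is to argue by contraposition, and the crucial point is that the obstruction must be dynamical rather than static. Suppose only finitely many primes are ever assigned $+1$, and let $N_{0}$ exceed all of them, so that $R(p)=-1=\lambda(p)$ for every prime $p>N_{0}$, where $\lambda$ is the Liouville function. Since $R$ and $\lambda$ are completely multiplicative and agree on all but finitely many primes, we have $R=g\cdot\lambda$, where $g$ is the completely multiplicative function with $g(p)=R(p)\lambda(p)$; here $g(p)=+1$ for $p>N_{0}$, so $g(n)$ depends only on the $N_{0}$-smooth part of $n$ and ranges over finitely many values. I stress that this conclusion is not by itself contradictory: $g\cdot\lambda$ is again a completely multiplicative $\{+1,-1\}$ function, and its partial sums are unbounded, exactly as Tao's theorem \cite{Tao} forces for $R$ (the discrepancy of a multiplicative colouring over the sets $A_{1,k}$ is simply $|\sum_{i\leqslant k}R(i)|$). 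Hence one cannot refute the hypothesis merely from the static structure of $R$, and the argument has to exploit the repair rule of the greedy algorithm itself.

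The first substantive step is to pin down when a prime is frozen. Switching a prime $q$ at step $j$ requires $q\in(\tfrac{j}{2},j)$, so once $j\geqslant 2p$ the colour of $p$ can never change again; thus $R(p)$ is decided by the parity of the number of times $p$ is switched during its active window $j\in(p,2p)$. A clean trigger for the first switch is visible at step $p+1$: every prime factor of $p+1$ is at most $\tfrac{p+1}{2}$ and is therefore already frozen, so $c(p+1)=R(p+1)$ at that moment; since the running sum after step $p$ equals $-1$, if $R(p+1)=-1$ then the sum reaches $-2$ and the repair must flip the largest currently negative prime in $(\tfrac{p+1}{2},p+1)$ up to $+1$ — and that prime is $p$ itself. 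Under the hypothesis $R=g\cdot\lambda$ one expects $R(p+1)=g(p+1)\lambda(p+1)=-1$ for a positive proportion of primes $p$, so infinitely many primes receive an initial up-flip, driven by the downward bias that a Liouville-type colouring imparts to the partial sums.

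The heart of the proof would then be an \emph{escape} argument. For such a prime to end at $-1$, its up-flip must be cancelled by a later down-flip that again singles out $p$, and this can only happen at an even step $j\in(p+1,2p)$ at which the running sum is $+2$ and $p$ is the largest positive prime in $(\tfrac{j}{2},j)$. This cancellation window is short, of length less than $p$, and within it $p$ competes for selection against all larger positive primes. The goal is to show that, for infinitely many of the up-flipped primes, no qualifying down-flip targets $p$ before step $2p$, so that $p$ remains at $+1$ and contributes to Conjecture \ref{Conjecture Rejmer 2}. Quantifying this requires controlling both the frequency of $+2$-repairs and the distribution of the primes they select inside each window $(p,2p)$, for which the estimates on primes in arithmetic progressions used in the proof of Theorem \ref{Theorem Main} are the natural starting point.

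The hard part will be exactly this last step. Reconciling the repair dynamics with the deep negative excursions that a Liouville-type colouring is expected to exhibit — equivalently, deciding whether the summatory function of a completely multiplicative $\{+1,-1\}$ function that agrees with $\lambda$ off finitely many primes can be one-sidedly bounded — is precisely the P\'olya-type question flagged in the abstract, and it appears to be genuinely open. I would expect that a full proof of Conjecture \ref{Conjecture Rejmer 2} must ultimately rest on new information about sign changes and extreme oscillations of twisted summatory Liouville functions, rather than on the elementary prime-counting estimates alone.
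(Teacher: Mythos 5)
The statement you are addressing is posed in the paper as an open conjecture; the paper contains no proof of it, so there is nothing to match your argument against, and your proposal does not close it either --- you concede as much in your final paragraph. It is worth separating the part of your plan that is actually correct from the parts that are missing. The correct part is the analysis of the repair rule: a prime $q$ can only be switched at steps $j$ with $q>j/2$, hence is frozen from step $2q$ onward; at step $j=p$ (with $p$ an odd prime) the running sum is $-1$ and $c(p)=-1$; at step $j=p+1$ every prime factor of $p+1$ is at most $(p+1)/2$ and hence already frozen, so $c(p+1)=R(p+1)$, and if $R(p+1)=-1$ the repair must flip the largest negatively coloured prime in $(\frac{p+1}{2},p+1)$, which is $p$ itself. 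This ``trigger'' observation is a genuine, checkable lemma about Rejmer's algorithm and is the most valuable piece of the proposal.

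The gaps are two, and each is fatal on its own. First, to feed the trigger you need infinitely many primes $p$ with $R(p+1)=-1$ under the contrapositive hypothesis $R=g\lambda$ with $g$ nontrivial on only finitely many primes. Since $g(p+1)$ depends only on the smooth part of $p+1$, this amounts to controlling the sign of $\lambda(p+1)$, i.e.\ the parity of $\Omega(p+1)$, along shifted primes --- a question sitting squarely behind the parity barrier of sieve theory; ``one expects'' is all you offer, and estimates of the type in Lemma \ref{Lemma McCurley} do not touch it. Second, and more seriously, the escape step --- showing that infinitely many up-flipped primes are never flipped back during the window $(p+1,2p)$ --- is entirely absent; it requires locating where the $+2$-repairs land inside each window, which depends on the fine oscillation of the partial sums of $R$, and, as you yourself note, this is of the same depth as the P\'olya-type Conjecture \ref{Conjecture Merdos Multiplicative}. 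Note also that a prime may be flipped down and up several times inside its window, so the conclusion really requires parity control over the entire multiset of repairs targeting $p$, not merely ruling out a single cancelling down-flip. As it stands, the proposal is a sensible research programme and a useful structural observation about the algorithm, but not a proof of Conjecture \ref{Conjecture Rejmer 2}.
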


\section{Majority version of the Erd\H {o}s Discrepancy Problem}

Let $h$ be a positive integer and let $c$ be a red-blue coloring of the set $A_{s,k}$. We say that $c$ is a \emph{majority} coloring if more than a half of the elements of $A_{s,k}-\{s\}$ have color different than the element $s$. Notice that a perfectly balanced coloring of $A_{s,k}$ satisfies the majority condition, while a majority coloring of $A_{s,k}$ can have all elements, except one, in the same color.

Is it possible that there is a red-blue coloring of $\mathbb{N}$ which satisfies the majority condition on \emph{every} homogeneous arithmetic progression $A_{s,k}$? Interpreting colors as numbers $\{+1,-1\}$, we may express the majority coloring of $A_{s,k}$ via the inequality:
\begin{equation}
c(a)\sum_{j=1}^{k}c(ja)\leqslant 0.
\end{equation}
So, the answer to the above question would be positive if we could find a completely multiplicative function $c$ satisfying the inequality
\begin{equation}
\sum_{j=1}^{k}c(j)\leqslant 0,
\end{equation}
for every $k\geqslant2$. A natural candidate for such negativity property is the Liouville function $\lambda(n)$. Actually in 1919 P\'{o}lya \cite{Polya} conjectured that $\sum_{i=1}^{n}\lambda(i)\leqslant 0$ for all $n\geqslant  2$, and proved that this would imply the Riemann Hypothesis (see \cite{BorweinCRW}). Unfortunately, this supposition occurred to be far from the truth, but the smallest counter-example is $n=906150257$.

As in the original Erd\H {o}s Discrepancy Problem, one may consider a relaxed version of majority coloring with some parameter $h$. Let us call a coloring $c$ of $A_{s,k}$ an \emph{$h$-majority} coloring if it satisfies:
\begin{equation}
c(s)\sum_{j=1}^{k}c(js)\leqslant h.
\end{equation}
This leads to the following conjecture.

\begin{conjecture}\label{Conjecture Merdos}
	There exists a constant $h$ and a red-blue coloring of $\mathbb N$ which is $h$-majority on every homogeneous arithmetic progression $A_{s,k}$.
\end{conjecture}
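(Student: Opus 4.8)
The plan is to follow the reduction sketched just before the statement and to search for a single \emph{multiplicative} witness. If $c$ is a multiplicative coloring then $c(js)=c(j)c(s)$, so for every homogeneous progression $A_{s,k}$ we have
\begin{equation}
c(s)\sum_{j=1}^{k}c(js)=c(s)^{2}\sum_{j=1}^{k}c(j)=\sum_{j=1}^{k}c(j),
\end{equation}
a quantity that is independent of $s$. Hence a multiplicative coloring is $h$-majority on \emph{every} $A_{s,k}$ precisely when its own partial sums satisfy $\sum_{j=1}^{k}c(j)\leqslant h$ for all $k\geqslant 1$, and the conjecture reduces to the single task named in the abstract: exhibit one completely multiplicative $c\colon\mathbb N\to\{+1,-1\}$ whose partial sums are bounded from above by a constant.

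My first attempt would be to start from the Borwein--Choi--Coons coloring of Lemma~\ref{Lemma Borwein} and to try to flip the side on which its sums are controlled. For a coloring with $b(p)\equiv p\ (\modulo 3)$ on all primes $p\neq 3$ and $b(3)=\varepsilon\in\{+1,-1\}$, writing $k=3^{a}m$ with $\gcd(m,3)=1$ yields the decomposition $\sum_{i=1}^{k}b(i)=\sum_{a\geqslant 0}\varepsilon^{a}X(\lfloor k/3^{a}\rfloor)$, where $X(M)=\sum_{m\leqslant M,\,\gcd(m,3)=1}b(m)\in\{0,1\}$ is a bounded character sum. The BCC choice $\varepsilon=+1$ makes every summand nonnegative, which is exactly why the sums count base-$3$ digits and stay in $[0,\lceil\log_3 k\rceil+1]$: bounded \emph{below}, but drifting upward like $\log_3 k$. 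The idea is to redistribute signs so as to cancel this upward drift---either by taking $\varepsilon=-1$, or by flipping a sparse set of primes from $+1$ to $-1$ in the spirit of the proof of Theorem~\ref{Theorem Main}---so that the running sum is pulled down each time it threatens to grow.

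The hard part, and the reason this is stated as a conjecture, is that none of these moves is seen to bound the sums from above \emph{for all} $k$ simultaneously. Taking $\varepsilon=-1$ merely turns the monotone growth into the alternating digit sum $\sum_{a}(-1)^{a}X(\lfloor k/3^{a}\rfloor)$, which can still accumulate; and flipping a single prime $p$ changes $b(n)$ at every $n$ with $v_{p}(n)$ odd, perturbing infinitely many partial sums at once by amounts whose signs depend on the current coloring, so no monotone control is visible. A one-sided analogue of Rejmer's greedy algorithm from Section~3 meets the same tension: always choosing $c(p)=-1$ reproduces the Liouville function, whose partial sums are known to turn positive (first at $n=906150257$), so the naive greedy fails, yet setting any prime to $+1$ immediately \emph{raises} the partial sum, which is the opposite of what an upper bound demands. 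Tellingly, the Borwein--Choi--Coons function realizes exactly the mirror image of our goal---sums bounded \emph{below}---and it is not clear that the bounded-\emph{above} side is attainable by any completely multiplicative $\pm1$ function at all. Dropping multiplicativity does not obviously help: each value $c(n)$ lies in $A_{d,\,n/d}$ for every divisor $d\mid n$, so the $h$-majority conditions for different steps are tightly coupled, and multiplicativity is precisely the device that collapses them to one inequality. I therefore expect the crux to be a genuinely new ingredient---either an explicit multiplicative function with upper-bounded partial sums, or a proof that none exists, which would redirect the search toward non-multiplicative colorings.
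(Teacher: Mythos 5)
The statement you were asked about is a \emph{conjecture}: the paper offers no proof of it, only a reduction and some negative evidence, so there is no ``paper proof'' to match yours against. What you have written is not a proof either --- and you say so yourself --- but your reduction is exactly the one the paper makes in Section~4. You observe that for a completely multiplicative $c$ one has $c(s)\sum_{j=1}^{k}c(js)=c(s)^{2}\sum_{j=1}^{k}c(j)=\sum_{j=1}^{k}c(j)$, so the $h$-majority condition on every $A_{s,k}$ collapses to the single requirement that the partial sums of $c$ be bounded above by $h$; this is precisely the paper's Conjecture~\ref{Conjecture Merdos Multiplicative}, which it also leaves open. Your computation is correct ($c(s)^{2}=1$), as is your base-$3$ decomposition $\sum_{i\leqslant k}b(i)=\sum_{a\geqslant 0}\varepsilon^{a}X(\lfloor k/3^{a}\rfloor)$ with $X(M)\in\{0,1\}$, which correctly explains why the Borwein--Choi--Coons function is bounded below but drifts upward, i.e.\ solves the mirror image of the problem.

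The one place where the paper has slightly more than you do is the obstruction to perturbing the Liouville function: you note informally that flipping a single prime perturbs infinitely many partial sums in ways that are hard to control, whereas the paper records the sharper fact that a standard argument with the Riemann zeta function forces \emph{any} function satisfying Conjecture~\ref{Conjecture Merdos Multiplicative} to differ from $\lambda$ at infinitely many primes. So: no error in what you wrote, your reduction coincides with the paper's, and the remaining gap --- exhibiting a completely multiplicative $\pm1$ function with partial sums bounded above, or ruling one out --- is exactly the open problem the authors intend. You should just not present this in the form of a proof environment, since nothing is proved beyond the equivalence with the multiplicative formulation.
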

A natural first attempt is to look for an appropriate multiplicative coloring, which leads to the following problem.
\begin{conjecture}\label{Conjecture Merdos Multiplicative}
	There exists a constant $h$ and a completely multiplicative function $c$ satisfying 
	\begin{equation}
	\sum_{j=1}^{k}c(j)\leqslant h,
	\end{equation}
	for all $k\geqslant1$.
\end{conjecture}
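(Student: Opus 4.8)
The plan is to construct a completely multiplicative $\pm 1$ function whose partial sums never exceed a fixed constant, taking as a model the function $b$ of Lemma \ref{Lemma Borwein}, whose partial sums satisfy $0\leqslant\sum_{i=1}^{k}b(i)\leqslant\lceil\log_3 k\rceil+1$ and are therefore bounded \emph{from below}. The conjecture asks for the reflected property, so the first instinct is to mirror the Borwein--Choi--Coons construction. Before any construction, though, it is worth recording what the answer cannot look like: by Tao's theorem \cite{Tao} the discrepancy of every multiplicative coloring is unbounded, and for completely multiplicative $c$ one has $\sum_{j=1}^{k}c(js)=c(s)\sum_{j=1}^{k}c(j)$, so $\sup_k|\sum_{j=1}^{k}c(j)|=\infty$. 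Hence a ceiling $\sum_{j=1}^{k}c(j)\leqslant h$ forces $\liminf_k\sum_{j=1}^{k}c(j)=-\infty$: the partial sums must plunge to $-\infty$ while never climbing above $h$.

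First I would set up the self-similar framework behind Lemma \ref{Lemma Borwein}. Fix a prime $q$, declare $c(q)=+1$, and let $c$ agree on the remaining primes with a fixed function of the residue class modulo $q$ (a real Dirichlet character being the natural choice). Splitting the summatory function according to divisibility by $q$ gives the recursion
\begin{equation}
S(N)=g(N)+S(\lfloor N/q\rfloor),\qquad S(N):=\sum_{j=1}^{N}c(j),
\end{equation}
where $g(N)$ depends only on $N\bmod q$ and is the incomplete sum of the periodic part over $\{1,\dots,N\}$ with multiples of $q$ removed; unrolling yields the $q$-adic digit sum $S(N)=\sum_{i\geqslant0}g(\lfloor N/q^i\rfloor)$. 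For $q=3$ and the nonprincipal character this recovers the digit-counting formula of Lemma \ref{Lemma Borwein}, whose increments are all nonnegative, explaining the lower bound. To obtain an \emph{upper} bound one would instead want a modulus $q$ and a periodic pattern whose incomplete sums $g$ are predominantly nonpositive.

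The hard part will be that no such self-similar function can work. The increments $g(\lfloor N/q^i\rfloor)$ at different scales are essentially free as $N$ ranges over admissible digit patterns, so one can always choose $N$ whose $q$-adic windows stack the positive values of $g$, producing partial sums of order $\log_q N$ in the \emph{positive} direction; already for the character modulo $8$, which does have negative incomplete sums, a periodic digit pattern drives $S(N)$ to $+\infty$. Thus any coloring that is self-similar under stripping a single prime has partial sums unbounded above, and the reflection of Borwein--Choi--Coons fails precisely because the normalization $c(1)=+1$ breaks the symmetry between the two directions. A viable attack must therefore abandon pure self-similarity: the route I would pursue is a multi-scale correction scheme in the spirit of the prime sign-flips used in the proof of Theorem \ref{Theorem Main}, but now applied at infinitely many scales so as to damp the accumulated positive drift while preserving multiplicativity, with the bookkeeping controlled through the Dirichlet series $\prod_p(1-c(p)p^{-s})^{-1}$ and Halász-type mean-value estimates. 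Making such cancellation uniform across all $k$ --- equivalently, realizing $\liminf_N S(N)=-\infty$ together with a hard ceiling $S(N)\leqslant h$ --- is exactly the delicate point, and is what I expect to keep the conjecture open.
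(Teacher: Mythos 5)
This statement is a \emph{conjecture} in the paper: the authors give no proof of it, only the remark that computer experiments with small perturbations of the Liouville function were inconclusive and that a standard Riemann-zeta-function argument forces any candidate $c$ to differ from $\lambda$ at infinitely many primes. Your proposal, read honestly, is also not a proof --- you say yourself in the last sentence that the uniform cancellation ``is what I expect to keep the conjecture open.'' So the concrete gap is simply that no construction is produced: the multi-scale sign-flipping scheme is described only as a route you would pursue, with the bookkeeping (Dirichlet series, Hal\'asz-type estimates) named but not carried out, and the decisive step --- forcing $\liminf_N S(N)=-\infty$ while maintaining $S(N)\leqslant h$ for all $N$ --- is exactly where the difficulty lives and is left untouched.

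That said, the surrounding analysis is sound and in fact sharpens the paper's own commentary. Your observation that Tao's theorem together with $\sum_{j\leqslant k}c(js)=c(s)\sum_{j\leqslant k}c(j)$ forces the partial sums of any admissible $c$ to be unbounded below is correct and worth stating explicitly. The self-similar recursion $S(N)=g(N)+S(\lfloor N/q\rfloor)$ and the ensuing digit-sum formula correctly explain why the Borwein--Choi--Coons example is bounded below rather than above, and your heuristic that one can stack the positive values of $g$ across scales is a reasonable (though not rigorously argued here) explanation of why a single-modulus self-similar construction cannot give an upper bound; this is consistent with, and more detailed than, the paper's remark that infinitely many primes must have their signs switched. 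If you want to turn any part of this into a publishable contribution, the negative statement --- that no completely multiplicative $c$ that is eventually a Dirichlet character times a choice at one prime can satisfy the conjecture --- is the piece that looks provable with the tools you cite, and would be a meaningful partial result even though the conjecture itself remains open.
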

We made some computer experiments with functions that are close to the Liouville function $\lambda(n)$ in the sense that only a few small primes have sign $+1$. For instance, switching only the sign of one small prime gives usually a function with much smaller partial sums. However, a standard argument using the Riemann Zeta function shows that in order to get a function satisfying Conjecture \ref{Conjecture Merdos Multiplicative} one has to switch signs of infinitely many primes.

\section{Final remarks}
Curiously, our initial impulse for Theorem \ref{Theorem Main} came from a different direction and was related to the following problem posed independently by Pach and P\'{a}lv\"{o}lgyi (see \cite{CaicedoCP}).
\begin{conjecture}\label{Conjecture Rainbow}
	For every $k\in \mathbb{N}$ there is a $k$-coloring of $\mathbb N$ such that every set $A_{s,k}$ is \emph{rainbow}.
\end{conjecture}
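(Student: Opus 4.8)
The plan is to imitate the reduction behind Theorem~\ref{Theorem Main}, with the two-element group $\{+1,-1\}$ replaced by the cyclic group $\mathbb{Z}/k\mathbb{Z}$, whose $k$ elements serve as the $k$ colors. Call a coloring $\chi:\mathbb{N}\to\mathbb{Z}/k\mathbb{Z}$ \emph{additive} if $\chi(ab)=\chi(a)+\chi(b)$ for all $a,b$; this is the exact analogue of a multiplicative coloring, it satisfies $\chi(1)=0$, and it is determined by arbitrary values on the primes. For such a $\chi$ we have $\chi(js)=\chi(j)+\chi(s)$, so the color sequence of $A_{s,k}$ is $\chi(s)+(\chi(1),\chi(2),\dots,\chi(k))$. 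Since translation by the constant $\chi(s)$ is a bijection of $\mathbb{Z}/k\mathbb{Z}$, every $A_{s,k}$ is rainbow as soon as the single sequence $(\chi(1),\dots,\chi(k))$ is a complete residue system modulo $k$. Furthermore these $k$ values depend only on $\chi$ at the primes $p\leqslant k$, so the values at larger primes may be assigned arbitrarily, exactly as in Lemma~\ref{Lemma Multiplicative Balanced}. This reduces Conjecture~\ref{Conjecture Rainbow} to the finite statement: \emph{for every $k$ one can choose residues $v_p\in\mathbb{Z}/k\mathbb{Z}$ for the primes $p\leqslant k$ so that the induced map $n\mapsto\sum_{p}v_p\,\nu_p(n)$, with $\nu_p$ the $p$-adic valuation, is a bijection from $\{1,2,\dots,k\}$ onto $\mathbb{Z}/k\mathbb{Z}$.}

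By pigeonhole a bijection is the same as an injection, so one only needs $\sum_p v_p\bigl(\nu_p(n)-\nu_p(m)\bigr)\not\equiv 0 \pmod k$ for every pair $m\neq n$ in $\{1,\dots,k\}$; the free parameters are the $\pi(k)$ values $v_p$, and the constraints are the $\binom{k}{2}$ "collision hyperplanes'' cut out by the difference vectors $\nu(n)-\nu(m)$. My first attempt would be probabilistic: choose the $v_p$ independently and uniformly and estimate the chance that all collisions are avoided, exploiting that small integers have very few prime factors (so the difference vectors are sparse) and that most ratios $n/m$ are far from multiplicatively degenerate. A concrete deterministic route worth pursuing in parallel uses discrete logarithms: pick a prime $q>k$ with $k\mid q-1$ and a primitive root $g$, and set $v_p\equiv\operatorname{ind}_g(p)\pmod k$ for $p\leqslant k$, so that $\chi(n)\equiv\operatorname{ind}_g(n)\pmod k$ for every $n\leqslant k$. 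The indices of $1,\dots,k$ are heuristically equidistributed modulo $k$, and the task becomes correcting their $O(1)$ per-residue defect by adjusting the values at the few primes in a top window such as $(k/2,k)$ — directly in the spirit of the sign-switching step in the proof of Theorem~\ref{Theorem Main}.

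The injectivity/bijection step is the main obstacle, and it is what keeps the statement a conjecture. The naive union bound fails already for prime $k$, where $\mathbb{Z}/k\mathbb{Z}$ is a field: there are $\approx k^2/2$ pairs but each collision hyperplane has probability $1/k$, giving the useless bound $\approx k/2$. Grouping pairs by their common ratio $n/m$ cuts the number of distinct constraints but still leaves $\Theta(k^2)$ of them, so one cannot treat the difference vectors as generic linear forms; the arithmetic structure must be used, for instance through a second-moment or entropy estimate showing the color values are nearly equidistributed, combined with a deterministic correction using the freedom at primes near $k$. The crux is exactly this passage from \emph{approximate} equidistribution to an \emph{exact} bijection, uniformly in $k$ — the $k$-color counterpart of upgrading the $h$-balanced conclusion of Theorem~\ref{Theorem Main} to perfect balance, which is already the delicate point for two colors.
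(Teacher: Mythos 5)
First, a point of calibration: the statement you set out to prove is stated in the paper as Conjecture \ref{Conjecture Rainbow}, an open problem attributed to Pach and P\'{a}lv\"{o}lgyi, and the paper offers \emph{no proof} of it --- only the remarks that it would imply Theorem \ref{Theorem Main} and a positive answer to Graham's question, and that it is equivalent to $\chi(G_k)=k$ for the divisibility graph $G_k$. So there is no proof in the paper to compare yours against, and your own text concedes, correctly, that what you have is a reduction plus a research program rather than a proof.

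The reduction itself is sound and is essentially the approach studied in the cited work \cite{CaicedoCP}: an additive coloring $\chi:\mathbb{N}\to\mathbb{Z}/k\mathbb{Z}$ makes every $A_{s,k}$ a translate of $(\chi(1),\dots,\chi(k))$, so it suffices to choose values $v_p$ at the primes $p\leqslant k$ for which $\{1,\dots,k\}$ is colored bijectively --- the exact analogue of Lemma \ref{Lemma Multiplicative Balanced}. But that finite statement is precisely where the whole difficulty lives, and none of your three routes closes it. The union bound fails, as you compute. The second-moment/entropy idea is not formulated as an actual estimate. The discrete-logarithm construction gives only heuristic equidistribution of $\operatorname{ind}_g(n)\bmod k$ over $n\leqslant k$, and the proposed ``correction at primes in $(k/2,k)$'' does not transfer from the two-color setting: in Theorem \ref{Theorem Main}, switching $c(p)$ for one prime $p\in(k/2,k)$ changes the single scalar $\sum_{i\leqslant k}c(i)$ by $2$, so a count of available primes suffices; here changing $v_p$ moves the element $p$ between color classes, and one must realize a prescribed multiset of surpluses and deficits across $k$ classes by such moves --- a matching problem you do not address, and for which the number of primes in $(k/2,k)$ (about $k/(2\log k)$) is not obviously adequate. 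Note also that restricting to additive colorings is a genuine restriction: a rainbow $k$-coloring need not arise from a homomorphism into $\mathbb{Z}/k\mathbb{Z}$, so this route is sufficient but not known to be necessary. In short, the gap is the passage from approximate to exact equidistribution that you yourself flag; the statement remains a conjecture.
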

 Notice that the above statement easily implies the assertion of Theorem \ref{Theorem Main}. Indeed, a desired red-blue coloring can be obtained by splitting the set of $k$ colors into two subsets of (almost) the same cardinality.

Another consequence of Conjecture \ref{Conjecture Rainbow} is a positive answer to the following question of Graham \cite{Graham}: Is it true that among any $n$ distinct positive integers $a_1,a_2,\dots,a_n$ there is always a pair $a_i,a_j$ satisfying $\frac{a_i}{\gcd (a_i,a_j)}\geqslant n$? The problem was solved in the affirmative for sufficiently large $n$ by Szegedy \cite{Szegedy} and independently by Zaharescu \cite{Zaharescu}. Then Balasubramanian and Soundararajan \cite{BalasubramanianS} gave a complete solution by using methods of Analytic Number Theory.

To see a connection between these two problems, consider a graph $G_k$ on positive integers in which two numbers $r,s$ are joined by an edge if and only if $\frac{r}{\gcd(r,s)}\leqslant k$ and $\frac{s}{\gcd(r,s)}\leqslant k$. Let $\omega(G_k)$ and $\chi(G_k)$ denote the clique number and the chromatic number of the graph $G_k$, respectively. Then Graham's problem is equivalent to $\omega(G_k)=k$, while Conjecture \ref{Conjecture Rainbow} is equivalent to a much stronger statement that $\chi(G_k)=k$ (see \cite{BosekDGSSZ DM}, \cite{CaicedoCP}).

Going back to the Erd\H {o}s Discrepancy Problem, it is natural to wonder to what extent the original question can have a positive answer. Let us call a pair of sets $(S,K)$ \emph{cute} if there is a constant $h$ and a red-blue coloring of $\mathbb N$ such that every set $A_{s,k}$ is $h$-balanced, for all $s\in S$ and $k\in K$. So, the result of Tao \cite{Tao} says that the pair $(\mathbb{N},\mathbb{N})$ is not cute, while Theorem \ref{Theorem Main} asserts that $(\mathbb{N},K)$ is cute for every singleton $K=\{k\}$. It is not hard to prove that there are infinite sets $K$ for which $(\mathbb{N},K)$ is still cute. For instance, one may use the multiplicative coloring $b$ from Lemma \ref{Lemma Borwein} to infer that $(\mathbb{N},K)$ is cute if $K$ is the set of positive integers avoiding $1$'s in their base $3$ expansion. Notice however, that this set $K$ has density zero. Is there a cute pair $(\mathbb{N},K)$ with $K$ of positive density?

On the other hand, there exist dense sets $S$ for which the pair $(S,\mathbb N)$ is cute. For instance, the alternating red-blue coloring of $\mathbb{N}$ is perfectly balanced on all sets $A_{s,k}$ with $s$ odd (see \cite{LeongShallit}). Is there a cute pair $(S,K)$ with both sets of density one?

\end{document}